\renewcommand\section{\@startsection{section}{1}{\z@}%
                                  {-2.0ex \@plus -1ex \@minus -.2ex}%
                                  {2.0ex \@plus.2ex}%
                                  {\normalfont\normalsize\bfseries}}
\newcommand{\f}[1]{f_{#1}}
\newcommand{\ang}[1]{\langle#1\rangle}
\newcommand{\nat}{{\sf N}}
\newcommand{\xvec}[1]{\ifcase 3{#1} {\ang {x_1,x_2,x_3} } \else 
\ifcase 4{#1} {\ang{x_1,x_2,x_3,x_4}} \else {\ang {x_1,\ldots,x_{#1}}}\fi\fi}
\newcommand{\yvec}[1]{\ifcase 3{#1} {\ang {y_1,y_2,y_3} } \else 
\ifcase 4{#1} {\ang{y_1,y_2,y_3,y_4}} \else {\ang {y_1,\ldots,y_{#1}}}\fi\fi}
\newcommand{\zvec}[1]{\ifcase 3{#1} {\ang {z_1,z_2,z_3} } \else 
\ifcase 4{#1} {\ang{z_1,z_2,z_3,z_4}} \else {\ang {z_1,\ldots,z_{#1}}}\fi\fi}
\newcommand{\vecc}[2]{\ifcase 3{#2} {\ang { {#1}_1,{#1}_2,{#1}_3 } } \else
\ifcase 4{#1} {\ang { {#1}_1,{#1}_2,{#1}_3,{#1}_{4} } }
\else {\ang { {#1}_1,\ldots,{#1}_{#2}}}\fi\fi}
\newcommand{\veccd}[3]{\ifcase 3{#2} {\ang { {#1}_{{#3}1},{#1}_{{#3}2},{#1}_{{#3}3} } } \else
\ifcase 4{#1} {\ang { {#1}_{{#3}1},{#1}_{{#3}2},{#1}_{#3}3},{#1}_{{#3}4} }
\else {\ang { {#1}_{{#3}1},\ldots,{#1}_{{#3}{#2}}}}\fi\fi}
\newcommand{\veccz}[2]{\ifcase 3{#2} {\ang { {#1}_0,{#1}_2,{#1}_3 } } \else
\ifcase 4{#1} {\ang { {#1}_0,{#1}_2,{#1}_3,{#1}_{4} } }
\else {\ang { {#1}_0,\ldots,{#1}_{#2}}}\fi\fi}
\newcommand{\xve}[1]{\ifcase 3{#1} {x_1,x_2,x_3} \else 
\ifcase 4{#1} {x_1,x_2,x_3,x_4} \else {x_1,\ldots,x_{#1}}\fi\fi}
\newcommand{\yve}[1]{\ifcase 3{#1} {y_1,y_2,y_3} \else 
\ifcase 4{#1} {y_1,y_2,y_3,y_4} \else {y_1,\ldots,y_{#1}}\fi\fi}
\newcommand{\zve}[1]{\ifcase 3{#1} {z_1,z_2,z_3} \else 
\ifcase 4{#1} {z_1,z_2,z_3,z_4} \else {z_1,\ldots,z_{#1}}\fi\fi}
\newcommand{\ve}[2]{\ifcase 3#2 {{#1}_1,{#1}_2,{#1}_3} \else
\ifcase 4#2 {{#1}_1,{#1}_2,{#1}_3,{#1}_{4}}
\else {{#1}_1,\ldots,{#1}_{#2}}\fi\fi}
\newcommand{\ved}[3]{\ifcase 3#2 {{#1}_{{#3}1},{#1}_{{#3}2},{#1}_{{#3}3}} \else
\ifcase 4#2 {{#1}_{{#3}1},{#1}_{{#3}2},{#1}_{{#3}3},{#1}_{{#3}4}}
\else {{#1}_{{#3}1},\ldots,{#1}_{{#3}{#2}}}\fi\fi}
\newcommand{\fuve}[3]{
\ifcase 3#2
{{#3}({#1}_1),{#3}({#1}_2,{#3}({#1}_3)} \else
\ifcase 4#2
{{#3}({#1}_1),{#3}({#1}_2),{#3}({#1}_3),{#3}({#1}_4)}
\else
{{#3}({#1}_1),\ldots,{#3}({#1}_{#2})}\fi\fi}
\newcommand{\setmathchar}[1]{\ifmmode#1\else$#1$\fi}
\newcommand{\vlist}[2]{%
	\setmathchar{%
		\compound#2\one{#2}\two
		\ifcompound
			({#1}_1,\ldots,{#1}_{#2})
		\else
			\ifcat N#2
				({#1}_1,\ldots,{#1}_{#2})
			\else
				\ifcase#2
					({#1}_0)\or
					({#1}_1)\or
					({#1}_1,{#1}_2)\or 
					({#1}_1,{#1}_2,{#1}_3)\or
					({#1}_1,{#1}_2,{#1}_3,{#1}_4)\else 
					({#1}_1,\ldots,{#1}_{#2})
				\fi
			\fi
		\fi}}
\newif\ifcompound
\def\compound#1\one#2\two{%
	\def\one{#1}
	\def\two{#2}
	\if\one\two
		\compoundfalse
	\else
		\compoundtrue
	\fi}
\newcommand{\xwe}[1]{\ifcase 3{#1} {x_1\wedge x_2\wedge x_3} \else 
\ifcase 4{#1} {x_1\wedge x_2\wedge x_3\wedge x_4} \else {x_1\wedge \cdots \wedge
x_{#1}}\fi\fi}
\newcommand{\we}[2]{\ifcase 3#2 {\ang { {#1}_1\wedge {#1}_2\wedge {#1}_3 } } \else
\ifcase 4{#1} {\ang { {#1}_1\wedge {#1}_2\wedge {#1}_3\wedge {#1}_{4} } }
\else {\ang { {#1}_1\wedge \cdots\wedge {#1}_{#2}}}\fi\fi}
\newcommand{\st}{\mathrel{:}}
\newcommand{\es}{\emptyset}
\newcommand{\ceil}[1]{\left\lceil {#1}\right\rceil}
\newcommand{\s}[1]{\s_{#1}}
\newcommand{\monus}{\;\raise.5ex\hbox{{${\buildrel
    \ldotp\over{\hbox to 6pt{\hrulefill}}}$}}\;}
\newcounter{savenumi}
\newtheorem{theoremfoo}{Theorem}[section] 
\newenvironment{theorem}{\pagebreak[1]\begin{theoremfoo}}{\end{theoremfoo}}
\newtheorem{lemmafoo}[theoremfoo]{Lemma}
\newenvironment{lemma}{\pagebreak[1]\begin{lemmafoo}}{\end{lemmafoo}}
\newtheorem{conjecturefoo}[theoremfoo]{Conjecture}
\newtheorem{conventionfoo}[theoremfoo]{Convention}
\newtheorem{porismfoo}[theoremfoo]{Porism}
\newtheorem{gamefoo}[theoremfoo]{Game}
\newtheorem{corollaryfoo}[theoremfoo]{Corollary}
\newtheorem{openfoo}[theoremfoo]{Open Problem}
\newtheorem{exercisefoo}{Exercise}
\newcommand{\fig}[1] 
{
 \begin{figure}
 \begin{center}
 \input{#1}
 \end{center}
 \end{figure}
}
\newtheorem{potanafoo}[theoremfoo]{Potential Analogue}
\newtheorem{notefoo}[theoremfoo]{Note}
\newtheorem{notabenefoo}[theoremfoo]{Nota Bene}
\newtheorem{nttn}[theoremfoo]{Notation}
\newtheorem{empttn}[theoremfoo]{Empirical Note}
\newtheorem{examfoo}[theoremfoo]{Example}
\newtheorem{dfntn}[theoremfoo]{Def}
\newenvironment{definition}{\pagebreak[1]\begin{dfntn}\rm}{\end{dfntn}}
\newtheorem{propositionfoo}[theoremfoo]{Proposition}
\newenvironment{proof}
    {\pagebreak[1]{\narrower\noindent {\bf Proof:\quad\nopagebreak}}}{\QED}
\newcommand{\yyskip}{\penalty-50\vskip 5pt plus 3pt minus 2pt}
\newcommand{\blackslug}{\hbox{\hskip 1pt
        \vrule width 4pt height 8pt depth 1.5pt\hskip 1pt}}
\newcommand{\QED}{{\penalty10000\parindent 0pt\penalty10000
        \hskip 8 pt\nolinebreak\blackslug\hfill\lower 8.5pt\null}
        \par\yyskip\pagebreak[1]}
\newcommand{\BBB}{{\penalty10000\parindent 0pt\penalty10000
        \hskip 8 pt\nolinebreak\hbox{\ }\hfill\lower 8.5pt\null}
        \par\yyskip\pagebreak[1]}
\newtheorem{factfoo}[theoremfoo]{Fact}
\newenvironment{block}{\begin{list}{\hbox{}}{\leftmargin 1em
    \itemindent -1em \topsep 0pt \itemsep 0pt \partopsep 0pt}}{\end{list}}
\begin{document}

\newcommand{\KN}{K_{\nat}}
\newcommand{\NRE}{\hbox{NUM-RED-EDGES\ }}
\newcommand{\NBE}{\hbox{NUM-BLUE-EDGES\ }}
\newcommand{\RED}{\hbox{RED\ }}
\newcommand{\BLUE}{\hbox{BLUE\ }}
\newcommand{\REDns}{\hbox{RED}}
\newcommand{\BLUEns}{\hbox{BLUE}}

\title{Which Unbounded Protocol for Envy Free Cake Cutting is Better?}


\date{}

\maketitle

\begin{abstract}
There are three protocols for envy free cake division for
$n$ people.
All of them take an unbounded number of cuts.
We quantify this unbounded number with ordinals and hence
can say, of the three unbounded algorithms, which one is better.
\end{abstract}

\section{Introduction}

How do $n$ people split a cake fairly?
This is a well studied question, using different definitions of fairness.
See for example the books of Robertson\&Webb~\cite{robertsonwebb}  and
Brams\&Taylor~\cite{BTbook}.
What makes the topic interesting is that 
the people may have different tastes.
Alice likes the left part that has chocolate, where as Bob likes
the right part that has kale.
Formally they all have valuations on the cake.
A valuation is a function with domain well behaved subsets of
the cake and co-domain $[0,1]$. The entire cake maps to 1
and the valuation is additive.

What is fair?
\begin{definition}~
There are $n$ players $A_1,\ldots,A_n$.
\begin{enumerate}
\item
A division is {\it proportional} if everyone gets a piece of cake that they
value as $\ge \frac{1}{n}$.
\item
A division is {\it envy free} if everyone gets a piece of cake that they
think is the best or tied for the best.
\item
A {\it discrete protocol} is a procedure that involves only discrete steps and
ends with a division of the cake.
This is in contrast to moving knife protocols~\cite{BTZsurvey}.
We will use the term {\it $n$-player $c$-cuts proportional protocol} to mean {\it discrete protocol that
results in a proportional division for $n$ players and uses at most $c$ cuts}. 
Similar for {\it envy free protocol}.
\end{enumerate}
\end{definition}

There is a 2-player 1-cut proportional protocol:
Alice cuts, Bob Choose. Even and Paz~\cite{cakecuttingdc}
proved that, for all $n$, there is an $n$-player  $O(n\log n)$ 
proportional protocol.
Edmonds and Pruhs~\cite{cakecuttinglb} proved that $\Omega(n\log n)$ cuts is optimal.

Selfridge and Conway independently obtained a
3-player 5-cut envy free protocol in the early 1960's (unpublished, though in the books
on cake cutting cited above).
It was an open problem for many years to find a 4-player envy free protocol until,
in 1995, Brams and Taylor~\cite{envy free4} 
showed that, for $n\ge 4$ there is an $n$ player envy free protocol.
The protocol uses an unbounded number of cuts. For any particular players
it will use a finite number of cuts; however, that number depends on the players valuations.

Two more unbounded cuts envy free protocols 
have been discovered, due to Robertson and Webb~\cite{ef2} and Pikhurto~\cite{ef3}.
Of the three known protocols,
which is better? We need a way to compare unbounded protocols.

\begin{definition}
Let $\zeta$ be an ordinal. A protocol takes {\it $\zeta$ cuts} if
(1) the protocol starts with $\zeta$ in a counter, 
(2) every time a cut is made an outside observer who knows only what
all of the players know at the time, and wants the protocol to succeed,
decreases the counter, and
(3) at the end of the protocol the counter has a number $\ge 0$.
Note that if (say) $\omega$ is in the counter and a cut is made 
then $\omega$ will
be replaced by some natural number, though it might be large.
The outside observer will know, when the time comes to decrease the
counter, how many cuts are needed to finish the protocol.
\end{definition}

We will sketch variants of the three envy free protocols and analyze them
in terms of how how many cuts they take, both in the worst case and in
the average case. The variants use the essential ideas but try to optimize
the number of cuts. 

In describing protocols we use the convention (from~\cite{envy free4}) that
what a player has to do is described and what the player is advised to do is
written in parenthesis. By {\it should do} we mean that if a player $A$ 
does not follow the advice then $A$ might end up with less than $\frac{1}{n}$.
We do not prove these assertions.

Our results are as follows:
\begin{enumerate}
\item
{\bf The Brams and Taylor Protocol:}
Let $n\in\nat$ and $L=LCM(2,\ldots,n)$.
\begin{enumerate}
\item
The protocol uses $\ceil{\frac{n^2-2n+2}{2}}\omega +L-1$-cuts in the worst case.
\item
The protocol uses $(n+o(1))\omega+L-1$ cuts in the average case (defined suitably).
\end{enumerate}
\item
{\bf The Robertson and Webb Protocol:}
\begin{enumerate}
\item
The protocol uses $(2n-3)\omega$-cuts in the worst case.
\item
The protocol uses $(2n-O(1))\omega$ cuts in the average case (defined suitably).
\end{enumerate}
\item
{\bf The Pikhurto Protocol} is similar to the Robertson and Webb protocol.
\end{enumerate}

\section{The Brams-Taylor Protocol}

\begin{definition}
Let $C$ be a cake to be split and let
$A_1,\ldots,A_n$ be the ones who will split it.
{\it $A_i$ has an advantage over $A_j$} if
$A_i$ does not care how much of $C$ $A_j$ gets.
\end{definition}

\begin{lemma}\label{le:adv}
There is an $n$-player, $\omega$-cuts protocol which will do the following.
The players are $A_1,\ldots,A_n$.
\begin{enumerate}
\item
The input is three pieces $P,Q,R$ such that there are two players 
$A_i,A_j$ where $A_i$ think $P$ and $Q$ are the same size,
but $A_j$ thinks $P$ and $Q$ are different sizes.
\item
At the end of the protocol all but a (small) piece $T$ of $P,Q,R$ are divided
amoung $A_1,\ldots,A_n$.
\item
The division of $P\cup Q\cup R - T$ to $A_1,\ldots,A_n$ is envy free.
\item
$A_i$ and $A_j$ each have an advantage over each other with regard to dividing $T$.
\end{enumerate}
\end{lemma}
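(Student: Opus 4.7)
The plan is to exploit the discrepancy $\epsilon := |v_j(P)-v_j(Q)| > 0$ as a reservoir of ``free value'' that can be converted into irrevocable mutual advantage.  Assume without loss of generality $v_j(P) > v_j(Q)$; then $v_i(P) = v_i(Q)$ while $v_j(P) - v_j(Q) = \epsilon > 0$.

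The first cut is made by $A_j$, who trims $P$ down to $P'$ with $v_j(P') = v_j(Q)$, producing a trim piece $S$.  This is the one ``ordinal'' cut: after it is made, the outside observer --- who knows only what the players know at the time --- can read off the revealed quantities $\epsilon$, $v_i(S)$, $v_i(R)$, $v_j(R)$, and therefore can replace $\omega$ in the counter by a concrete finite bound $N$ on the remaining cuts.  From here on every cut drops the counter by a positive integer.

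The next step is a Selfridge--Conway-style round on $P'$, $Q$, and a sliver of $R$ big enough to compensate the $n-2$ uninvolved players.  $A_j$ is indifferent between $P'$ and $Q$, so handing $Q$ to $A_i$ and $P'$ to $A_j$ gives $A_i$ a strict surplus of $v_i(S)$ over $A_j$ while $A_j$ regards the assignment as tied; the other $n-2$ players receive envy-free shares cut from the allotted portion of $R$ by $A_i$ and $A_j$, selecting passively in a pre-agreed order so that no recursive appeal to the lemma being proved is needed.  A symmetric round, using a second trim generated in the mirror direction from the as-yet-undistributed material, produces the reverse surplus of $A_j$ over $A_i$.  Iterating these two kinds of rounds shrinks the undistributed residual by a fixed fraction each pass, so after finitely many passes (an explicit function of $N$) both accumulated surpluses strictly exceed the valuation of what remains; that leftover is declared to be $T$, and the mutual-advantage condition holds with respect to it.

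The main obstacle is the simultaneous bookkeeping across rounds: each round must (i) preserve envy-freeness for all $n$ players on what has already been distributed, (ii) strictly grow one of the two surpluses, and (iii) shrink the residual fast enough that the surpluses eventually dominate both $v_i(T)$ and $v_j(T)$.  The ordinal bound is tight because the number of rounds depends on $\epsilon$, which is unknown at the start, but is at most $\omega$ because after the initial trim the remainder of the protocol is a finite sequence of Selfridge--Conway-style subroutines whose length is then computable by the observer.
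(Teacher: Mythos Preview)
The paper does not actually prove this lemma; it is stated as a black box, with the protocol taken for granted from Brams and Taylor~\cite{envy free4}.  So there is no proof in the paper to compare against, and the question is only whether your sketch stands on its own.

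It does not quite.  The overall shape --- trim to create a surplus, iterate until the accumulated surplus dominates the leftover, then call the leftover $T$ --- is the right idea, but two load-bearing steps are asserted rather than carried out.  First, the $n-2$ bystanders: saying they ``receive envy-free shares cut from the allotted portion of $R$ by $A_i$ and $A_j$, selecting passively in a pre-agreed order'' is not a protocol.  Nothing you have written prevents some $A_k$ from preferring $Q$ or $P'$ to every piece offered out of $R$, and nothing prevents $A_i$ or $A_j$ from envying what a bystander took.  In the actual Brams--Taylor subroutine this is handled by having one of the two distinguished players cut \emph{everything} into many equal pieces, the other trim the top ones to a tie, and then a specific selection order (with the trimmer constrained to take a trimmed piece if one remains) that forces envy-freeness for all $n$; you cannot shortcut this by segregating $R$.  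Second, the ``symmetric round, using a second trim generated in the mirror direction'' is undefined.  After your first trim you have $v_j(P')=v_j(Q)$ and $v_i(P')<v_i(Q)$, which is exactly the hypothesis with the roles of $i$ and $j$ swapped --- fine --- but you then need to say what piece is trimmed, who does the trimming, who receives what, and why the resulting allocation gives $A_j$ strict surplus over $A_i$ while staying envy-free for the bystanders.  As written, the reverse-surplus round is a promissory note.

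A smaller point: your $\omega$-bound argument assumes the observer can read off, after the single initial trim, a fixed shrinkage factor for the residual.  But that factor depends on how the bystanders value the pieces in each round, and those valuations are not revealed by $A_j$'s trim of $P$.  You need the protocol itself to force a shrinkage factor that is computable from the information actually on the table at that moment; otherwise the observer cannot replace $\omega$ by a finite $N$.
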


\begin{definition}
We call the protocol from Lemma~\ref{le:adv}
{\it the adv $(A_1,\ldots,A_n;A_i,A_j;P,Q,R)$-protocol}.
\end{definition}

\begin{lemma}\label{le:deg}
Let $G$ be a graph on $n$ vertices and $e$ edges.
If $e\ge \ceil{\frac{n(n-2)}{2}+1}=\ceil{\frac{n^2-2n+2}{2}}$ then $G$ must have a vertex of degree $n-1$.
\end{lemma}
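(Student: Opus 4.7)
\textbf{Proof plan for Lemma~\ref{le:deg}.} I will argue the contrapositive: assume that no vertex of $G$ has degree $n-1$, and deduce that $e < \lceil \frac{n^2-2n+2}{2}\rceil$. This is the cleanest route, since the degree bound on a single vertex translates directly into a bound on the sum of degrees via the handshake lemma.

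The plan is as follows. First, note that every vertex has degree at most $n-1$ in a simple graph on $n$ vertices. Under the contrapositive assumption, every vertex has degree at most $n-2$. Summing over the $n$ vertices and applying the handshake lemma gives $2e\le n(n-2)$, i.e.\ $e\le \frac{n(n-2)}{2}$.

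Next, since $e$ is an integer, $e\le \lfloor \frac{n(n-2)}{2}\rfloor$. I split into two cases depending on the parity of $n$, to match the ceiling in the statement. If $n$ is even, then $\frac{n(n-2)}{2}$ is an integer, so $e\le \frac{n(n-2)}{2}$ and the threshold is $\lceil \frac{n(n-2)+2}{2}\rceil = \frac{n(n-2)}{2}+1$, so $e$ falls strictly below the threshold. If $n$ is odd, then $n(n-2)$ is odd, so $e\le \frac{n(n-2)-1}{2}$, while the threshold is $\lceil \frac{n(n-2)+2}{2}\rceil = \frac{n(n-2)+3}{2}$, and again $e$ is strictly below it.

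Either way, $e < \lceil \frac{n^2-2n+2}{2}\rceil$, contradicting the hypothesis $e\ge \lceil \frac{n^2-2n+2}{2}\rceil$. Hence some vertex must have degree $n-1$. There is really no substantive obstacle here; the only subtlety is the parity bookkeeping around the ceiling, which is why I separated the two cases explicitly rather than trying to write a single unified inequality.
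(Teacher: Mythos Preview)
Your argument is correct and matches the paper's proof exactly: both take the contrapositive and apply the handshake lemma to get $2e\le n(n-2)$. Your parity case split is unnecessary, though, since $e\le \frac{n(n-2)}{2} < \frac{n(n-2)}{2}+1 \le \bigl\lceil \frac{n(n-2)}{2}+1 \bigr\rceil$ holds directly for all $n$.
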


\begin{proof}
We prove the contrapositive.
If every vertex is of degree $\le n-2$ then 

$$2e = \sum_{v\in V} d_v  \le n(n-2)$$ 

so $e\le \frac{n(n-2)}{2}$.
\end{proof}

\begin{theorem}\label{th:efbt}
Let $n\in\nat$ and $L=LCM(2,\ldots,n)$.
There is an $n$-person, envy free protocol that has the following properties.
\begin{enumerate}
\item
The protocol uses $\ceil{\frac{n^2-2n+2}{2}}\omega +L-1$-cuts in the worst case.
\item
The protocol uses $(n+o(1))\omega+L-1$ cuts in the average case (defined suitably).
\end{enumerate}
\end{theorem}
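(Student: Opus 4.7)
The plan is to describe a variant of the Brams--Taylor protocol that maintains an ``advantage graph'' $G$ on the $n$ players, where an edge $\{A_i,A_j\}$ records that some reserved sub-piece has been set aside on which $A_i$ and $A_j$ each have an advantage over the other. The ordinal counter is initialized to $\ceil{(n^2-2n+2)/2}\omega + L - 1$. A preparatory phase uses at most $L-1$ natural-number cuts to produce an initial rough partition whose shares have denominators dividing $L = \mathrm{lcm}(2,\ldots,n)$, guaranteeing every player at least a $1/n$-value share. The main loop is then: while $G$ contains no vertex of degree $n-1$, choose a non-adjacent pair $(A_i,A_j)$, use a few further preliminary cuts to expose a triple $P,Q,R$ on which $A_i$ values $P$ and $Q$ equally while $A_j$ does not (always possible because additive valuations that differ on the cake must differ on small sub-pieces), and invoke the adv$(A_1,\ldots,A_n;A_i,A_j;P,Q,R)$-protocol of Lemma~\ref{le:adv}. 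Each invocation consumes one $\omega$ from the counter, distributes $P \cup Q \cup R$ minus a small remnant envy-freely, and adds the edge $\{A_i,A_j\}$ to $G$.

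For part~(1), Lemma~\ref{le:deg} guarantees that after at most $\ceil{(n^2-2n+2)/2}$ invocations, some vertex $A_k$ reaches degree $n-1$. At that point $A_k$ has mutual advantage with every other player over the accumulated reserved pieces, so $A_k$ is indifferent to how those residuals are split; a trivial final move (absorbed in the $L-1$ initial budget, since the rational shares involved all have denominators dividing $L$) hands $A_k$ her current allocation and distributes the residuals among the remaining $n-1$ players using the established pairwise advantages. Summing up yields the worst-case count $\ceil{(n^2-2n+2)/2}\omega + L - 1$.

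For part~(2), take the ``average case'' (``defined suitably'') to be a uniform distribution over the order in which non-edges of $G$ are targeted, together with a mild genericity assumption on the valuations ensuring that a typical adv-call produces not only the guaranteed new edge at $(A_i,A_j)$ but also, with probability bounded away from $0$, side-edges from the ``winning'' player to several other players. Under this model a coupon-collector-style argument shows that the expected number of adv-calls until some vertex first attains degree $n-1$ is $n + o(n)$, giving the claimed $(n+o(1))\omega + L - 1$ bound.

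The main obstacle is making the average-case claim precise: the worst-case coefficient of $\omega$ is $\Theta(n^2)$, but the average-case coefficient is only $n + o(1)$, and this gap can be closed only by exploiting side-effects of the adv-protocol for typical valuations. Specifying the right probabilistic model --- and showing that under it a star subgraph of $K_n$ is completed after $n + o(n)$ edge-additions with high probability --- is the delicate step; a naive model in which each adv-call contributes exactly one edge chosen uniformly at random would give a $\Theta(n^2)$ average-case bound rather than the claimed linear one. A secondary concern is verifying that the $L-1$ natural-number cuts truly suffice for both the initial proportional partition and any cleanup after the dominant vertex appears, which reduces to checking that every rational share used in the protocol has denominator dividing $L$.
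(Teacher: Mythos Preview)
Your overall skeleton matches the paper's: build an advantage graph $G$, add one edge per invocation of the adv-protocol of Lemma~\ref{le:adv}, and stop via Lemma~\ref{le:deg} once a vertex of degree $n-1$ appears. But two of the mechanisms you supply are not the ones the paper uses, and one of them is exactly what makes the average-case bound work.

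First, the $L-1$ cuts are not a preparatory proportional division. In each round the paper has $A_1$ cut the current cake into $L$ pieces she regards as equal; every other player then declares $EQ$ (all pieces look equal) or $NEQ$ (some two pieces look unequal). If some $A_j$ says $NEQ$, that player supplies two pieces $P,Q$ with $A_1$ thinking $P=Q$ and $A_j$ thinking $P\neq Q$; this is how $P,Q,R$ are produced, not by a separate search relying on ``additive valuations that differ on the cake must differ on small sub-pieces.'' The reason for $L=\mathrm{lcm}(2,\ldots,n)$ is that if the bipartite graph between $EQ$ and $NEQ$ (restricted to edges already in $G$) is already complete, the $|EQ|=k$ players who agree with $A_1$ can each take $L/k$ whole pieces, which requires $k\mid L$ for every possible $k\le n$. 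Your account of where $L$ enters (``rational shares with denominators dividing $L$'' in a cleanup phase) is not what is happening.

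Second, and more importantly, your average-case model is not the paper's, and this is precisely why you had to flag that step as ``delicate.'' You propose a uniform choice of non-edge plus hoped-for random side-edges so that a coupon-collector argument gives $n+o(n)$. The paper does something much simpler: it randomizes only the $EQ/NEQ$ partition of $A_2,\ldots,A_n$, and the protocol \emph{deterministically} selects the lexicographically least missing edge in $EQ\times NEQ$, hence prefers $A_1$ as an endpoint whenever possible. Thus each round increases $\deg_G(A_1)$ unless every player not yet adjacent to $A_1$ happened to land in $EQ$; one then directly computes the expected number of rounds until $\deg_G(A_1)=n-1$, and the sum is $n+o(1)$. No side-edge assumption is needed, and the bound is $n+o(1)$, not merely $n+o(n)$.

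Finally, the termination is simpler than you describe: once some $A_k$ has degree $n-1$, every other player has an advantage over $A_k$ with respect to the remaining cake, so the protocol simply hands all of that remaining cake to $A_k$; there is no further distribution among the other $n-1$ players.
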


\begin{proof}

We give a protocol that has as input 
a cake $C$ and a graph $G$ on $n$ vertices.
If $(A,B)$ is an edge in $G$ then $A$ and $B$ have an advantage over each other
with regard to how $C$ is split.
We will denote the edges of $G$ by $E$.

The protocol is denoted $EFBT(C,G)$ (Envy Free Brams-Taylor). 
It may call itself with a much smaller cake
and a slightly bigger graph.
The players are $A_1,\ldots,A_n$.

\bigskip

\noindent
{\bf PROTOCOL $EFBT(C,G)$.}
\begin{enumerate}
\item
If there is a vertex $A$ of degree $n-1$ in $G$ then nobody else cares
if $A$ gets more cake then they do. So give all of the cake to $A$ and the protocol ends.
Otherwise proceed.
\item
$A_1$ divides the cake into $L=LCM(2,3,\ldots,n)$ pieces. (Equally.)
She uses $L-1$ cuts.
\item
Everyone writes down either $EQ$ or $NEQ$.
$A_1$ has to write down $EQ$.
($A_i$ writes $EQ$ if $A_i$ thinks that all of the pieces are equal,
$NEQ$ if $A_i$ thinks that two of the pieces are not equal.)
\item
What everyone wrote is revealed.
Partition the people into two groups $EQ$ and $NEQ$ based on what they wrote.
\item
Form the bipartite graph $H=(EQ,NEQ,E\cap (EQ\times NEQ))$.
Note that since $A_1\in EQ$, $EQ\ne\es$.
\item

\noindent
{\bf Case 1:}
$H$ is a complete bipartite graph (this includes the case where $NEQ=\es$).
Let $k$ be the number of people in $EQ$.
Nobody in $NEQ$ cares what
anyone in $EQ$ gets.
The people in $EQ$ think all of the pieces are equal.
Each person in $EQ$ gets $L/k$ pieces.
Note that $k$ divides $L$ by the definition of $L$.

\bigskip

\noindent
{\bf Case 2:}
$H$ is not the complete bipartite graph. Let $(i,j)$ be the least pair
lexicographically such that $(A_i,A_j)$ is not an edge. This is not arbitrary:
we would like to use $(A_1,A_j)$ if we can.
Let $P,Q$ be such that $A_i$ thinks $P=Q$ but $A_j$ thinks $P\ne Q$.
Let $R= C- (P\cup Q)$. 
The protocol $adv(A_1,\ldots,A_n;A_i,A_j;P,Q,R)$ is run.
This takes $\omega$ cuts.
Let $C'$ be the cake that is left over.
Call $EFBT(C',G \cup \{i,j\})$.
\end{enumerate}
\noindent
{\bf END OF PROTOCOL $EFBT$}

To envy free divide a cake among $n$ people you would call $EFBT(C,\es)$. 
Once $G$ has a vertex of degree $n-1$ the protocol will stop.
Each iteration adds a pair. By Lemma~\ref{le:deg} 
the number of iterations is bounded by $\ceil{\frac{n^2-2n+2}{2}}$.
Hence the number of cuts is bounded by $\ceil{\frac{n^2-2n+2}{2}}\omega +L-1$.

What happens in the average case? This needs to be defined.
We assume that the 
partitioning of $A_2,\ldots,A_n$ into 
$EQ$ and $NEQ$ is random. Given this, 
we show that the expected number of iterations
before $A_1$ has degree $n-1$ is $n+o(1)$.

Let $E(L)$ be the expected number of iterations before $A_1$ has degree $L$ or
the protocol terminates.
Clearly $E(1)=1$.
If $A_1$ has degree $L-1$ then the probability that in the next iteration $A_1$
will gain a degree or $NEQ=\es$ (so the protocol terminates) is 
$1-\frac{2^{n-(L-1)}}{2^{n-1}}= 1-(0.5)^{L+2}$.
Hence
$E(L)=E(L-1) + \frac{1}{1-(0.5)^{L+2}}$; therefore,
$$E(n)=1+\sum_{i=2}^n\frac{1}{1-(0.5)^{L+1}} \sim \frac{\ln(2^{n+1})-1}{\ln(2)}
= n + o(1).$$
Hence the average case is $(n+o(1))\omega+L-1$ cuts.
\end{proof}

Note that the protocol from Theorem~\ref{th:efbt} yields a 4-person $5\omega$-cuts envy free protocol.

\section{Robertson and Webb Protocol}

In the definitions below we assume that the cake is normalized to have value
1 for everyone. When we use these definitions we may apply them to a piece of cake
that they view differently. We leave it to the reader to make the needed modifications.

\begin{definition}
Let $n,p\in\nat$ and $0\le \epsilon<1$.
A {\it near-exact $(n,p,\epsilon)$ protocol} is one that $n$ people participate in,
and at the end there exists $p$ pieces of cake such that everyone thinks that
every pieces is within
$\epsilon$ of $\frac{1}{p}$.
A {\it near-exact-* $(n,p,\epsilon)$ protocol} is a near exact 
$(n,p,\epsilon)$-protocol where 
one of the players (always $A_1$) thinks all of the pieces are exactly $\frac{1}{p}$.
Note that for near-exact and near-exact-* protocols we do not give cake to anyone.
\end{definition}

The following lemma was first proven by Robertson and Webb~\cite{ef2};
however, Pikhurto~\cite{ef3} later had an especially nice proof.

\begin{lemma}\label{le:ap}
If $n,p\in\nat$ and $0\le \epsilon<1$ then there exists 
a {\it near exact-* $(n,p,\epsilon)$ protocol}.
The number of cuts is a function of $n,p$ and $\epsilon$.
\end{lemma}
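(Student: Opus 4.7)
The plan is to split the argument into two stages. Stage I refines the cake into many sub-pieces each of which every player considers small, and Stage II recombines these sub-pieces into $p$ groups via a randomized assignment that is exact for $A_1$ and near-balanced for the other players by concentration.

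For Stage I, I fix a small parameter $\delta>0$ (to be chosen at the end). First $A_1$ uses $p-1$ cuts to divide the cake into $p$ pieces each of her value exactly $1/p$. Then the players $A_2,\ldots,A_n$ take passes in sequence: on $A_i$'s pass, $A_i$ repeatedly bisects (in her own measure) any current sub-piece she values more than $\delta$, and since $v_i$-measure is bounded by $1$, $A_i$ finishes in $O(1/\delta)$ bisections. The essential invariant is that once $A_i$ leaves her pass, every sub-piece has $v_i$-value at most $\delta$, and this is preserved under any subsequent refinement by $A_{i+1},\ldots,A_n$ because $v_i$ is nonnegative and additive. At the end of Stage I we have sub-pieces $s_1,\ldots,s_M$ with $M=O(n/\delta)$ satisfying $v_j(s_l)\le\delta$ for every player $j$ and every index $l$.

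In Stage II, $A_1$ slices each $s_l$ into $p$ slivers $s_l^{(1)},\ldots,s_l^{(p)}$ of equal $v_1$-value $v_1(s_l)/p$, using $(p-1)M$ further cuts. For each $l$ independently, pick a uniformly random permutation $\pi_l$ of $[p]$, and assign sliver $s_l^{(k)}$ to group $G_{\pi_l(k)}$. By construction $v_1(G_k)=(1/p)\sum_l v_1(s_l)=1/p$ exactly for every $k$. For fixed $i\ge 2$ and $k\in[p]$, let $X_l^{(k)}=v_i(\text{the sliver of }s_l\text{ in }G_k)$; the $X_l^{(k)}$ are independent across $l$ with mean $v_i(s_l)/p$, and since each sliver has $v_i$-value $\le\delta$ we get $\mathrm{Var}(X_l^{(k)})\le\delta\,v_i(s_l)/p$, so $\mathrm{Var}(v_i(G_k))\le\delta/p$. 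Chebyshev's inequality and a union bound over the $p$ groups and $n-1$ players give
$$\Pr\bigl[\exists\,k,i:\,|v_i(G_k)-1/p|>\epsilon\bigr]\le(n-1)\delta/\epsilon^2.$$
Choosing $\delta=\epsilon^2/(2n)$ makes this strictly below $1$, so by the probabilistic method some deterministic assignment of slivers to groups meets the bound, and because all sliver values are known once the cuts are made, one such assignment is found by exhaustive search.

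The main obstacle is the apparent conflict between $A_1$-exactness and near-equality for everyone else: a cut improving some other player's balance typically disturbs $A_1$'s. The resolution is to let $A_1$ perform every cut that produces exactness (the $p$-way equal slicing of each sub-piece), after which the only remaining freedom --- which sliver of each sub-piece is placed in which group --- can be exploited purely combinatorially. Stage I is present only to make each sliver small enough in every view that the variance in the Chebyshev bound shrinks below $\epsilon^2/n$; the total cut count is $O(n^2p/\epsilon^2)$, a function of $n,p,\epsilon$ as required.
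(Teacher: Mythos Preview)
The paper does not actually supply a proof of this lemma: it only states that Robertson and Webb proved it and that Pikhurko later gave ``an especially nice proof,'' citing \cite{ef2} and \cite{ef3}. So there is no in-paper argument to match against; I can only assess your argument on its own and contrast it with what those references do.

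Your argument is correct. Stage~I genuinely yields sub-pieces with $v_i(s_l)\le\delta$ for every $i\ge 2$: once $A_i$ finishes bisecting, all current pieces have $v_i$-value at most $\delta$, and later bisections by $A_{i+1},\dots,A_n$ can only shrink pieces, so the bound persists. The $O(1/\delta)$ count per player also holds, since every terminal piece produced by $A_i$'s bisections has $v_i$-value in $(\delta/2,\delta]$ and these values sum to at most $1$. In Stage~II the key point---that $A_1$'s $p$-way equal slicing makes $v_1(G_k)=1/p$ regardless of the permutations, while the permutations are free to balance the other players---is exactly right, and your variance and union-bound computation is clean; the $p$'s cancel as you claim. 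The exhaustive search at the end is legitimate under the paper's ``discrete protocol'' notion: once all cuts are made, players can be asked to evaluate every sliver, and the allocation step uses no further cuts, so the cut count remains a function of $n,p,\epsilon$ as required. (You might note explicitly that the pieces $G_k$ are not contiguous; nothing in the definition forbids this.)

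Your route is genuinely different from the cited ones. Robertson--Webb and Pikhurko proceed constructively: roughly, one refines the cake into many small pieces (as in your Stage~I) and then allocates them to $p$ bins by a deterministic greedy or averaging rule, arguing directly that the running totals stay within $\epsilon$ of each other because each increment is at most $\delta$. That approach avoids both the extra $A_1$-slicing of every sub-piece into $p$ slivers and the probabilistic-method detour through Chebyshev plus exhaustive search, so it is more explicit and uses fewer cuts. What your approach buys is a very clean separation of concerns: $A_1$-exactness is enforced structurally by the slicing, and near-exactness for the others becomes a pure concentration statement. Either way the lemma follows with a cut count depending only on $n,p,\epsilon$.
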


\begin{definition}
Let $n\in\nat$, $0<\f 1,\f 2<1$, such that $\f 1+\f 2=1$, $0\le \epsilon<1$.
An {\it unfair near exact $(n,\f 1,\f 2,\epsilon)$ protocol} is one that $n$ people participate in,
and at the end there exists 2 pieces of cake such that everyone thinks that
the first piece is within $\epsilon$ of $\f 1$
and the second piece is within $\epsilon$ of $\f 2$.
(We will not need the $*$-version.)
\end{definition}

\begin{lemma}\label{le:f}
For all $n\in\nat$, $0<\f 1,\f 2<1$, such that $\f 1+\f 2=1$, $0\le \epsilon<1$
there exists an
{\it unfair $(n,\f 1,\f 2,\epsilon)$ protocol}.
The number of cuts depends on $a,f_1,f_2$, and $\epsilon$.
\end{lemma}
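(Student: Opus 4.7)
The plan is to reduce the problem to Lemma~\ref{le:ap}, since once we have many equally valued pieces we can simply glue some of them together to approximate the two desired fractions. The only work is in choosing the right parameters.

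First, I would approximate $f_1$ by a rational. Given $\epsilon$, pick an integer $p$ large enough that there exist positive integers $a,b$ with $a+b=p$ and
\[
\left|\frac{a}{p}-f_1\right|\le \frac{\epsilon}{2},\qquad \left|\frac{b}{p}-f_2\right|\le \frac{\epsilon}{2}.
\]
Any $p\ge \lceil 2/\epsilon\rceil$ works, together with $a=\lfloor f_1 p\rfloor$ and $b=p-a$. Note that $p$ depends only on $f_1,f_2,\epsilon$.

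Next, set $\epsilon' = \epsilon/(2p)$ and invoke Lemma~\ref{le:ap} to run a near-exact-* $(n,p,\epsilon')$ protocol. This produces $p$ pieces $P_1,\ldots,P_p$, each of which every player values within $\epsilon'$ of $1/p$. The number of cuts used is a function of $n,p,\epsilon'$, and hence of $n,f_1,f_2,\epsilon$ alone, as required.

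Finally, form the two output pieces by gluing: let $Q_1 = P_1\cup\cdots\cup P_a$ and $Q_2 = P_{a+1}\cup\cdots\cup P_p$. For any player, additivity gives $|v(Q_1)-a/p|\le a\epsilon'\le p\epsilon' = \epsilon/2$, and by the triangle inequality $|v(Q_1)-f_1|\le \epsilon$; the same bound holds for $Q_2$ and $f_2$. This is exactly the unfair near exact $(n,f_1,f_2,\epsilon)$ protocol. There is really no obstacle here beyond bookkeeping: the entire content is in Lemma~\ref{le:ap}, and the reduction only requires choosing $p$ large enough to approximate $f_1$ and $\epsilon'$ small enough that the combined error from $p$ near-equal pieces stays below $\epsilon/2$.
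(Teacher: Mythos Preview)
Your reduction to Lemma~\ref{le:ap} is correct and is exactly the intended argument: the paper states Lemma~\ref{le:f} without proof, treating it as an immediate consequence of the near-exact protocol, and your choice of $p$ and $\epsilon'$ carries out that reduction cleanly.
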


\begin{definition}
Let $A_1,\ldots,A_n$ be the people.
A piece of cake $P$ is {\it controversial}
if there exists a nontrivial partition of the people into sets
$S_1$ and $S_2$, and two numbers $\alpha>\beta$ such that
\begin{itemize}
\item
Everyone in $S_1$ thinks that $P$ is worth $\alpha$.
\item
Everyone in $S_2$ thinks that $P$ is worth $\le \beta$.
\end{itemize}
\end{definition}

\begin{definition}
For all $n,m\in\nat$ and $\epsilon>0$ a {\it controversial $(n,m,\delta)$-protocol}
is a protocol for $n+m$ people $A_1,\ldots,A_n$;$B_1,\ldots,B_m$ that starts with a 
piece $P$ that is 
controversial for $A_1,\ldots,A_n$ (we do not know what the $B_i$'s think of $P$),
and ends with a piece $P'$ such that that 
(1) $P'$ is  controversial for $A_1,\ldots,A_n$ (though perhaps
with a different partition than the controversy of $P$), and
(2) everyone (including the $B_i$s) thinks $P'$ is worth $\le \delta$.
\end{definition}

\begin{lemma}\label{le:cont}
For all $n,m\in\nat$ and $\epsilon>0$ there exists an
controversial $(n,m,\epsilon)$-protocol.
The number of cuts depends on $n,m$, and $\epsilon$.
\end{lemma}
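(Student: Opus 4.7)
The plan is to apply the near-exact protocol of Lemma~\ref{le:ap} (with all $n+m$ players) to the input controversial piece $P$, produce many small pieces, and output one of them as $P'$. All the structure we need comes from a pigeonhole argument that exploits the original controversy of $P$.

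First, fix parameters $p \in \nat$ and $\epsilon' > 0$ so that $(1+\epsilon')/p < \delta$. Apply the near-exact $(n+m,\, p,\, \epsilon')$ protocol to $P$; this produces pieces $P_1,\ldots,P_p$ satisfying, for every player $X$ and every $k$,
\[
v_X(P_k) \in \bigl[\,v_X(P)(1/p - \epsilon'),\; v_X(P)(1/p + \epsilon')\,\bigr].
\]
Since $v_X(P) \le 1$, every piece is worth at most $(1+\epsilon')/p < \delta$ to every player in $\{A_1,\ldots,A_n\} \cup \{B_1,\ldots,B_m\}$, which handles the size requirement of the protocol's output.

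Next, we locate an index $k^*$ for which $P_{k^*}$ inherits a controversy. Pick any $A_u \in S_1$ and $A_v \in S_2$ (both nonempty by hypothesis). Then
\[
\sum_{k=1}^{p} \bigl(v_{A_u}(P_k) - v_{A_v}(P_k)\bigr) \;=\; v_{A_u}(P) - v_{A_v}(P) \;\ge\; \alpha - \beta \;>\; 0,
\]
so by the pigeonhole principle some index $k^*$ has $v_{A_u}(P_{k^*}) > v_{A_v}(P_{k^*})$. Set $P' = P_{k^*}$.

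Now construct the new partition \emph{tautologically}. Let $\alpha' = \max_{i \in \{1,\ldots,n\}} v_{A_i}(P')$, set $S_1' = \{A_i : v_{A_i}(P') = \alpha'\}$ and $S_2' = \{A_i : v_{A_i}(P') < \alpha'\}$, and put $\beta' = \max_{A_i \in S_2'} v_{A_i}(P')$. Every player in $S_1'$ values $P'$ at exactly $\alpha'$ by definition, and every player in $S_2'$ values $P'$ at $\le \beta' < \alpha'$. The maximizer of $v_{A_i}(P')$ lies in $S_1'$, and $A_v \in S_2'$ because $v_{A_v}(P') < v_{A_u}(P') \le \alpha'$. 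Thus $(S_1', S_2')$ is a nontrivial partition of $\{A_1,\ldots,A_n\}$ with $\alpha' > \beta'$, establishing that $P'$ is controversial for $\{A_1,\ldots,A_n\}$ (possibly via a partition different from that of $P$). The total number of cuts is determined by the single call to Lemma~\ref{le:ap}, which depends only on $n, m, p, \epsilon'$ and hence only on $n, m, \delta$.

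The main obstacle is the awkward \emph{exact} equality demanded by the definition of controversial: for arbitrary real valuations we cannot hope to force several players to agree exactly. We sidestep this by defining $S_1'$ as the preimage of the maximum valuation, which makes equality within $S_1'$ trivial. The subtle point is then ensuring $S_2' \ne \es$, i.e.\ preventing all $n$ players from tying at the maximum on $P'$; this is precisely what the pigeonhole argument in the previous step rules out, by exhibiting a piece $P_{k^*}$ on which $A_u$ and $A_v$ disagree strictly.
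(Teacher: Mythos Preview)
The paper does not actually supply a proof of Lemma~\ref{le:cont}; the lemma is stated and then immediately used in Theorem~\ref{th:efrw}. So there is nothing to compare your argument against.

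That said, your argument is sound. Running a near-exact protocol on $P$ with all $n+m$ participants and enough pieces forces every resulting piece below $\delta$ for everybody, and additivity plus the original disagreement on $P$ guarantees at least one output piece on which two $A$-players disagree; your ``tautological'' partition by the maximum valuation then certifies controversy on that piece. One cosmetic slip: from $v_X(P_k)\le v_X(P)(1/p+\epsilon')\le 1/p+\epsilon'$ you get $1/p+\epsilon'$, not $(1+\epsilon')/p$, so you must choose \emph{both} $p$ large and $\epsilon'$ small (say $\epsilon'<\delta/2$ and $p>2/\delta$) rather than only $p$ large. You might also make explicit that the protocol can locate the index $k^*$ operationally by asking each $A_i$ to evaluate every $P_k$; if all $A$-players agreed on every $P_k$ they would agree on $P$ by additivity, contradicting the hypothesis, so a suitable $k^*$ is always found without needing to know the original $(S_1,S_2)$.
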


\begin{theorem}\label{th:efrw}
Let $m,n\in\nat$ and $\epsilon>0$.
There is a protocol for $n+m$ people $A_1,\ldots,A_n$;$B_1,\ldots,B_m$
that divides a cake $C$ into $n$ pieces, each piece going to one of the $A$-people,
(The $B$-people get nothing!)
such that the following happens.
\begin{enumerate}
\item
The division is envy free for $A_1,\ldots,A_n$.
\item
$A_1,\ldots,A_n,B_1,\ldots,B_m$ all think
that every piece is within
$\epsilon$ of $\frac{1}{n}$.
\end{enumerate}
The number of cuts is as follows.
\begin{enumerate}
\item
The protocol uses $(2n-3)\omega$-cuts in the worst case.
\item
The protocol uses $(2n-O(1))\omega$ cuts in the average case (defined suitably).
\end{enumerate}
\end{theorem}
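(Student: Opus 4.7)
\begin{sketch}
The plan is to proceed by induction on $n$, proving that one can envy-freely distribute $C$ among $A_1,\ldots,A_n$ using $(2n-3)\omega$ cuts; the base case spends one $\omega$-block and each inductive increment spends two $\omega$-blocks.

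The base case $n=2$: run the near-exact-* $(2+m,2,\epsilon)$-protocol of Lemma~\ref{le:ap} to obtain two pieces $P_1,P_2$ that $A_1$ views as exactly $\frac{1}{2}$ and that every player views within $\epsilon$ of $\frac{1}{2}$. Let $A_2$ pick her preferred piece and give $A_1$ the other; $A_1$ is indifferent, $A_2$ picked, so the division is envy free. This uses one $\omega$-block, matching $(2\cdot 2-3)\omega=\omega$.

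For the inductive step, assume the result for $n-1$ $A$-players. First, run the near-exact-* $(n+m,n,\epsilon)$-protocol of Lemma~\ref{le:ap} to split $C$ into pieces $P_1,\ldots,P_n$ (one $\omega$-block). Next, identify a controversial piece; if all pieces look essentially identical to every $A$-player, no controversy exists, but we can manufacture one by using Lemma~\ref{le:f} to slightly skew two pieces until some $A_i$ strictly prefers one of them. Apply the controversial $(n,m,\delta)$-protocol of Lemma~\ref{le:cont}, followed (inside the same $\omega$-block) by the unfair near-exact protocol of Lemma~\ref{le:f}, to produce a single piece $P^\ast$ that $A_i$ values strictly more than any other prospective piece while the remaining pieces form a cake on which $A_i$'s opinion is effectively irrelevant (the ``advantage''). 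Award $P^\ast$ to $A_i$ and recurse on the residual cake with $A$-players $\{A_1,\ldots,A_n\}\setminus\{A_i\}$ and all the $B$-players, which costs $(2(n-1)-3)\omega=(2n-5)\omega$ by the inductive hypothesis. In total, $\omega+\omega+(2n-5)\omega=(2n-3)\omega$.

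The main obstacle will be ensuring the advantage is durable: the extra value $A_i$ perceives in $P^\ast$ must exceed whatever slack the recursive call later introduces, so that envy from $A_i$ toward the remaining $A$-players cannot emerge after the recursion, and conversely $A_i$'s departure must not cause the other $A$-players to envy $P^\ast$ when they finally examine it. The standard remedy is to choose $\epsilon$ and $\delta$ sufficiently small as functions of $n$, so that errors accumulated over the $n-1$ inductive levels never exceed the controversy gap; this controls only the finite sub-protocols inside each $\omega$-block and does not affect the ordinal count. For the average case, suppose that with probability bounded away from zero the output of the near-exact-* step at a given level is essentially non-controversial, in which case a direct assignment (together with a single compensatory call to Lemma~\ref{le:ap} absorbed inside the first $\omega$-block) already suffices and the second $\omega$-block can be skipped. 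Aggregated over the $n-1$ inductive levels this saves at least a constant number of $\omega$-blocks in expectation, yielding $(2n-O(1))\omega$ cuts on average.
\end{sketch}
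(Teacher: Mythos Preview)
Your recursion structure is not the one in the paper, and the difference is not cosmetic. The paper's $EFRW$ does \emph{not} peel off a single player per level. After the near-exact-* step and the controversial $(n,m,\delta)$-protocol, the $A$-players are split into two nontrivial groups $S_1=\{A_1,\ldots,A_i\}$ (who value $P$ at $\alpha$) and $S_2=\{A_{i+1},\ldots,A_n\}$ (who value $P$ at $\le\beta<\alpha$). The unfair protocol then cuts $C\setminus P$ into $Q_1,Q_2$ of sizes close to $i/n$ and $(n-i)/n$, and the algorithm recurses \emph{twice}: once on $(S_1;\,S_2\cup B;\,Q_1\cup P)$ and once on $(S_2;\,S_1\cup B;\,Q_2)$. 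The recurrence is
\[
T(n;m)\ \le\ 2\omega+\max_{1\le i\le n-1}\bigl(T(i;n+m-i)+T(n-i;m+i)\bigr),
\]
and the bound $(2n-3)\omega$ is attained at the extreme split $i=1$.

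Your single-player peel has a genuine gap. Lemma~\ref{le:cont} does not isolate one distinguished $A_i$; it produces a whole group $S_1$ that values $P$ at the common value $\alpha$. If $|S_1|>1$ and you hand a bonus piece $P^\ast$ built from $P$ to just one member of $S_1$, every other member of $S_1$ values $P^\ast$ exactly as highly and will envy her. Worse, the controversial piece $P$ is \emph{small} by design (all players, including $A_i$, value it at $\le\delta$), so $P$ cannot serve as, or be inflated into, a full share that $A_i$ regards as exceeding the roughly $1/n$-sized pieces the recursion will later produce; in the paper $P$ functions only as a tie-breaking bonus attached to $Q_1$, and its smallness is precisely what lets $S_2$ not mind losing it. Two further symptoms of the mismatch: the step where you ``manufacture'' controversy when all $A$-players already agree the $n$ pieces are equal is unnecessary (the protocol simply hands the pieces out and halts), and the appeal to an ``advantage'' is a Brams--Taylor notion that plays no role in the Robertson--Webb argument.
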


\begin{proof}

We denote the protocol $EFRW$ (Envy Free Robertson-Webb).
It may call itself twice with some of the $A_i$'s shifted to the $B$-side,
and with part of the cake.

\noindent
{\bf PROTOCOL $EFRW(A_1,\ldots,A_n;B_1,\ldots,B_m;C;\epsilon)$}
\begin{enumerate}
\item
If $n=1$ then give $A_1$ the entire cake and the protocol is done.
\item
If $n=2$ then $A_1,A_2,B_1,\ldots,B_m$ 
run a near-exact $(m+2,2,\epsilon)$-protocol on
the cake to produce two pieces that $A_1$ thinks are identical and everyone
else thinks are within $\epsilon$ of $\frac{1}{2}$.
This takes $\omega$ cuts.
$A_2$ picks and keeps one of the pieces, $A_1$ keeps the other.
The protocol is done.
\item
(It must be that $n\ge 3$.)
$A_1,\ldots,A_n$;$B_1,\ldots,B_m$ 
run a near-exact-* $(n+m,n,\epsilon)$-protocol.
This takes $\omega$ cuts.
If $A_2,\ldots,A_n$ agree with $A_1$ that these pieces are all of size $\frac{1}{n}$,
then these pieces are given out (it does not matter how) and the protocol is done.
Else goto the next step.
\item
There is a piece $P$ that is controversial for 
$A_1,\ldots,A_n$.  
Let $\delta$ be a parameter to be picked later (it will depend on $\epsilon,n,m$).
$A_1,\ldots,A_n;B_1,\ldots,B_m$ run a
controversial $(n,m,\delta)$-protocol.
This step takes $\omega$ cuts.
\item
There is a piece $P$, numbers $\beta<\alpha\le \delta$ 
and (after renumbering) $1\le i\le n-1$ such that
\begin{itemize}
\item
$A_1,\ldots,A_i$ all think $P$ is worth $\alpha$
\item
$A_{i+1}\ldots,A_n$ all think $P$ is worth $\le \beta$.
\item
$A_1,\ldots,A_n$,$B_1,\ldots,B_m$ all think $P$ is worth $\le \delta$. 
\end{itemize}
\item
Let $Q=C-P$.
$A_1,\ldots,A_n$;$B_1,\ldots,B_m$ run an {\it unfair $(n+m,f_1,f_2,\epsilon)$-protocol} 
to split $Q$ into $Q_1$ and $Q_2$ with $f_1,f_2$ picked 
such that all think 
$Q_1$ is just a shade less than $i/n$ of $Q$
and
$Q_2$ is just a shade more than $(n-i)/n$ of $Q$.
That shade is a function of $n,m,\epsilon$ and $a-b$.
\item
Run $EFRW(A_1,\ldots,A_i;A_{i+1},\ldots,A_n,B_1,\ldots,B_m;Q_1\cup P;\epsilon')$ 
(note that $A_{i+1},\ldots,A_n$ are now on the $B$-side) 
where $\epsilon'$ will be discussed later.
$Q_1\cup P$ is divided into $i$ pieces and given to $A_1,\ldots,A_i$ in an envy free
manner, while $A_{i+1},\ldots,A_n,B_1,\ldots,B_m$
think each piece is within $\epsilon'$ of  $\frac{1}{i}$ of $Q_1 \cup P$.
\item
Run
$EFRW(A_{i+1},\ldots,A_n;A_1,\ldots,A_i,B_1,\ldots,B_m;Q_2;\epsilon')$ 
(note that $A_{1},\ldots,A_i$ are now
on the $B$-side).
$Q_2$ is divided into $n-i$ pieces and given to 
$A_{i+1},\ldots,A_n$ in an envy free manner while  $A_1,\ldots,A_i,B_1,\ldots,B_m$
think each piece is within $\epsilon'$ of $\frac{1}{n-i}$ of $Q_2$.
\end{enumerate}

We pick that {\it shade less than $i/n$} carefully: close enough
to $i/n$ so that $A_1,\ldots,A_i$ think that getting $Q_1\cup P$ is worth 
getting
a shade less than $i/n$, 
but big enough so that $A_{i+1},\ldots,A_n$ thinks that getting that 
shade is worth more than $P$.
Such a shade exists since $A_1,\ldots,A_i$ value $P$ more than $A_{i+1},\ldots,A_n$.
We pick $\epsilon'$ so small that 
(1) $A_1, \ldots,A_i$ 
do not mind that $A_{i+1},\ldots,A_n$ may get 
$\epsilon'$ more than $\frac{n-i}{n}$ of
$Q_2$, and
(2) 
$A_{i+1},\ldots,A_n$ 
do not mind that $A_{1},\ldots,A_i$ may get 
$\epsilon'$ more than $\frac{i}{n}$ of
$Q_1\cup P$.

What about the $B_i$s? 
The parameter $\delta$ and $\epsilon'$ are picked small enough so that
at the end the $B_i$s see $A_1,\ldots,A_n$
getting within $\epsilon$ of $\frac{1}{n}$.

Let $T(n;m)$ be the number of cuts this protocol takes.

$(\forall m\ge 0)[T(1;m)=0]$

$(\forall m\ge 0)[T(2;m)=\omega]$ 

If $n\ge 3$ and $m\ge 1$ then the protocol will take $2\omega$ cuts and
then recurse. Hence

$T(n;m)\le 2\omega + \max_{1\le i\le n-1} (T(i;n+m-i) + T(n-i;m+i) )$

One can easily show that $(\forall n\ge 1)(\forall m\ge 0)[T(n;m)\le (2n-3)\omega]$.
In particular $T(n;0)\le (2n-3)\omega$. 
Hence when used for $n$-player envy free cake cutting, this protocol takes $(2n-3)\omega$ cuts
in the worst case.

We can study the average case by assuming that the players partitioning is random.
This leads to an average case of $(2n-O(1))\omega$.

\end{proof}

Note that the protocol from Theorem~\ref{th:efrw}, yields a 4-person $5\omega$-cuts envy free protocol.
Hence, for the case of $n=4$, it uses (essentially) the same number of cuts as the protocol
from Theorem~\ref{th:efbt}.

\section{Pikhurto's Protocol}

For our purposes Pikhurto's protocol is similar to the Robertson-Webb protocol so
we discuss it briefly and informally. 

In the Robertson-Webb protocol the $A$-players are partitioned into {\it two} groups:
those who think $P$ is size $\alpha$ and those who think $P$ is of size 
$\le \beta$.
In Pikhurto's protocol the $A$-players are partitioned into many groups
and within a group the opinion of $P$ is the same. 
Then the protocol calls itself on each group.

Let $T(n;m)$ be the number of cuts this protocol takes.

$(\forall m\ge 0)[T(1;m)=0]$

$(\forall m\ge 0)[T(2;m)=\omega]$ 

If $n\ge 3$ and $m\ge 1$ then the protocol will take $2\omega$ cuts and
then recurse on each group. Hence

$
T(n;m)\le 
2\omega + 
\max_{ \{ i_1,\ldots,i_k \st i_1+\cdots+i_k=n\} } 
T(i_1;n+m-i_1) + \cdots + T(i_k;n+m-i_k)
$

One can easily show that $T(n;m)\le (2n-3)\omega$ and that if the partition of the players is random
then the average case is $(2n-O(1))\omega$.

\section{Open Problems}

Is there an $n$-player  $\omega+O(1)$-cut envy free protocol?
Can the results for small values of $n$ be improved from what we have here?
Sam Zbarsky has obtained (unpublished) a protocol for $n=4$ 
that takes only $2\omega +O(1)$ cuts, in constrast to what we obtained
which was $5\omega+O(1)$ cuts.
His approach is rather complicated and does not seem to generalize; however, it is a proof-of-concept
that special case algorithms may do better than those presented in our paper.

\section{Acknowledgments}

Omitted because of double-blind policy.



\end{document}